\newtheorem{definition}{Definition}[section]
\newtheorem{proposition}[definition]{Proposition}
\newtheorem{theorem}[definition]{Theorem}
\newtheorem{lemma}[definition]{Lemma}
\newtheorem{corollary}[definition]{Corollary}
\newtheorem{remark}{Remark}[section]
\newcommand{\re}{\mathbb R} 
\def \pt{\partial}
\DeclareMathOperator{\diver}{div}
\title[3D axisymmetric Navier-Stokes equations]{
Asymptotic properties of steady solutions \\
to the 3D axisymmetric Navier-Stokes equations \\
 with no swirl
}
\author{Hideo Kozono, Yutaka Terasawa and Yuta Wakasugi}
\address[H. Kozono]{Department of Mathematics, Faculty of Science and Engineering,
Waseda University, Tokyo 169--8555, Japan, 
Research Alliance Center of Mathematical Sciences, Tohoku University, 
Sendai 980-8578, Japan}
\email[H. Kozono]{kozono@waseda.jp, hideokozono@tohoku.ac.jp}
\address[Y. Terasawa]{Graduate School of Mathematics, Nagoya University,
Furocho Chikusaku Nagoya 464-8602, Japan}
\email[Y. Terasawa]{yutaka@math.nagoya-u.ac.jp}
\address[Y. Wakasugi]{Graduate School of Science and Engineering,
Hiroshima University,
Higashi-Hiroshima, 739-8527, Japan}
\email[Y. Wakasugi]{wakasugi@hiroshima-u.ac.jp}
\begin{document}
\begin{abstract}
We study the asymptotic behavior of axisymmetric solutions with no swirl to
the steady Navier-Stokes equations in the outside of the cylinder.
We prove an a priori decay estimate of the vorticity
under the assumption that the velocity has generalized finite Dirichlet integral.
As an application, we obtain a Liouville-type theorem.
\end{abstract}
\keywords{Axisymmetric Navier-Stokes equations; no swirl; asymptotic behavior; Liouville-type theorems}

\maketitle
\section{Introduction}
\footnote[0]{2010 Mathematics Subject Classification. 35Q30; 35B53; 76D05}

We study the asymptotic behavior of axisymmetric solutions with no swirl to
the steady Navier-Stokes equations
\begin{align}
\label{sns}
	\left\{ \begin{array}{l}
		(v \cdot \nabla )v + \nabla p = \Delta v, \\
		\nabla \cdot v = 0,
		\end{array} \right.
		\quad x \in D,
\end{align}
where $D$ is the outside of cylinders in $\re^3$ specified later,
and
$v = v(x) = (v_1(x), v_2(x), v_3(x))$
and
$p = p(x)$
denote the velocity vector field and the scalar pressure
at the point $x = (x_1, x_2, x_3) \in D$, respectively.
\par
To state previous results, 
we temporary append the condition at infinity
\begin{align}%
\label{vinf}
	\lim_{|x|\to \infty} v(x) = v_{\infty}
\end{align}%
with a given constant vector $v_{\infty}$.
For the stationary Navier-Stokes equations \eqref{sns}--\eqref{vinf}
in general exterior domains $D$
with the condition $v=0$ at the compact boundary $\partial D$ and
a given smooth external force $f$,
Leray \cite{Le} proved that there exists at least one smooth solution
$(v,p)$.
The solution constructed in \cite{Le} has the finite Dirichlet integral.
In general, the solution $v$ of (\ref{sns}) having the bounded Dirichlet integral, 
i,e., $\int_D|\nabla v(x)|^2 dx < \infty$ is called
a D-solution.
Although the convergence \eqref{vinf} had been shown in such a  weak sense 
as $\int_D|v(x) - v_\infty|^6dx < \infty$,  
later on,
Finn \cite{Fi59} proved that any D-solution converges to the prescribed constant vector uniformly at infinity.
After that, Finn \cite{Fi59b} introduced the notion of the physically reasonable (PR) solution $v$ of (\ref{sns}) which satisfies
$v(x) = O(|x|^{-1})$ if $v_{\infty} = 0$ and 
$v(x) - v_{\infty} = O(|x|^{-\frac12-\varepsilon})$ if $v_{\infty} \neq 0$
with some $\varepsilon > 0$.
Then, Finn \cite{Fi65} constructed a PR-solution, provided that the data are sufficiently small.
Furthermore, in the case where $v_{\infty} = 0$ and the external force is sufficiently small,
Galdi--Simader \cite{GaSi}, Novotny--Padula \cite{NoPa},
and Borchers--Miyakawa \cite{BoMi}
constructed a solution satisfying
$v(x) = O(|x|^{-1})$ and $\nabla v(x) = O(|x|^{-2})$.
We note that, in particular, this solution satisfies 
$\nabla v \in L^q$ for all $q > 3/2$
(we refer \cite{KoYa98} by the first author and Yamazaki
for construction of the solution in the class $\nabla v \in L^{3/2, \infty}$).
\par
It has been an important problem to study the relation between the D-solution and the PR-solution.
It is easily proved that every PR-solution is necessarily a D-solution, however,
the converse implication, 
namely, the precise asymptotic behavior of D-solutions,
had been an open question.
For that question,
Babenko \cite{Ba73} proved that
if $v_{\infty} \neq 0$ and the external force $f$ is compactly supported, then
any D-solution is a PR-solution. On the other hand, in the case of $v_{\infty} = 0$, much less is known.
Galdi \cite{Ga92} showed the same result as \cite{Ba73} when $v_{\infty} = 0$,
provided that the data are sufficiently small.
\par
In order to study further the asymptotic behavior of solutions 
when $v_{\infty} = 0$,
recently, axisymmetric solutions are fully investigated.
For the axisymmetric solutions, we may expect that the
situation becomes similar to that of the 2-dimensional case 
in which the asymptotic behavior of the solution is well-studied.
For the literature of 2-dimensional problems, we refer the reader to
\cite{GiWe78, Am88, KoPiRu19, KoPiRu20}.
\par
\bigskip
In what follows,
we use the cylindrical coordinates
$r = \sqrt{x_1^2 + x_2^2}$,
$\theta = \tan^{-1} (x_2/x_1)$,
$z = x_3$,
and let
$e_r = (x_1/r, x_2/r, 0)$,
$e_{\theta} = (-x_2/r, x_1/r, 0)$,
and
$e_z = (0,0,1)$.
Using such $\{e_r, e_{\theta}, e_z\}$ as an orthogonal basis in $\re^3$, 
we express the vector field $v = v(x)$ as
\begin{align*}%
	v(x) = v^r(r,\theta,z) e_r + v^{\theta}(r,\theta, z) e_{\theta} + v^z(r,\theta,z) e_z.
\end{align*}%
By axisymmetry we mean that $v_r$, $v_\theta$ and $v_z$ are independent of 
$\theta$.  
Choe--Jin \cite{ChoJi}, Weng \cite{We}, and Carrillo--Pan--Zhang \cite{CaPaZh}
showed that an axisymmetric solution of \eqref{sns}--\eqref{vinf} in $D = \re^3$ 
with  $v_{\infty} = 0$ in the class the finite Dirichlet integral
satisfies
\begin{eqnarray}%
	&& v(x) = O\left( \frac{(\log r)^{\frac12}}{r^{\frac12}} \right),  \nonumber \\
	&&
	| \omega^{\theta} | = O \left( \frac{(\log r)^{\frac34}}{r^{\frac54}} \right),\quad
	| \omega^r | + |\omega^z| = O \left( \frac{(\log r)^{\frac{11}{8}}}{r^{\frac98}} \right) 
	\label{est_CaPaZh}
\end{eqnarray}%
uniformly in $z$ as $r \to \infty$,
where $\omega^{r}, \omega^{\theta}, \omega^z$ are the components of the vorticity
$\omega$
defined by
\begin{equation}\label{eqn:1.4}%
	\omega^{r} = - \partial_z v^{\theta},\quad
	\omega^{\theta} = \partial_z v^r - \partial_r v^z,\quad 
	\omega^z = \frac{1}{r} \partial_r (r v^{\theta}).
\end{equation}%

Recently, Li--Pan \cite{LiPa} studied 
a similar asymptotic behavior of solutions in the class of  
the finite generalized Dirichlet integral
\begin{align}
\label{Diri}
	\int_{\mathbb{R}^3} |\nabla v(x) |^q \,dx < + \infty
\end{align}
for some $q \in (2,\infty)$.
They first showed
\begin{align}%
\label{lipa_v}
	\begin{cases}
		\exists v_{\infty}^z \in \mathbb{R}, \ | v(r,z) - (0,0,v_{\infty}^z) | = O(r^{1-\frac3q}),
			&q \in (2,3),\\
		\mbox{for any} \ r_0 > 0 \ \mbox{and}\ r > r_0, \ | v(r,z) - v(r_0,z) | = O \left( \log \frac{r}{r_0} \right),	
			&q = 3,\\
		\mbox{for any} \ r_0 \ge 0 \ \mbox{and} \ r > r_0, \ |v(r,z) - v(r_0,z) | = O(r^{1-\frac3q}),
			&q \in (3,\infty)
	\end{cases} 
\end{align}%
as $r \to \infty$ uniformly in $z \in \mathbb{R}$.
Then, for the behavior of $\omega$, they obtained 
\begin{align}%
\label{lipa_om1}
		|\omega^{\theta}(r,z)| = O(r^{-(\frac1q + \frac{3}{q^2} + \varepsilon)}), \quad
		| \omega^r (r,z) | + |\omega^z(r,z) | 
		= O(r^{-(\frac1q + \frac{1}{q^2} + \frac{3}{q^3} + \varepsilon)} ),
\end{align}%
for an arbitrary $\varepsilon>0$  
provided that
$q \in [3,\infty)$ and that $\sup_{z\in \mathbb{R}} |u(r_0,z)| \le C$ for some $r_0 > 0$ hold,
or, $q \in (2,3)$ and $v_{\infty}^z = 0$ hold.
Besides them, they also showed
\begin{align}%
\label{lipa_om2}
	|\omega^{\theta}(r,z)| = O(r^{-\frac2q + \varepsilon}), \quad
| \omega^r (r,z) | + |\omega^z(r,z) | = O(r^{-(\frac1q + \frac{2}{q^2}) + \varepsilon} ),
\end{align}%
for an arbitrary  $\varepsilon > 0$
provided that $q \in (2,3)$ and that $v_{\infty}^z \neq 0$ hold.
\par
\vspace{2mm}
In this paper,
we treat general axisymmetric exterior domains, and
we prove that if the velocity has no swirl, that is, $v^{\theta} \equiv 0$,
then the decay rates of $\omega^{\theta}$
obtained in \cite{CaPaZh} and \cite{LiPa} are further improved.
We note that, as described before, solutions satisfying \eqref{Diri} are constructed in
\cite{GaSi}, \cite{NoPa}, \cite{BoMi}, \cite{KoYa98},
and concerning the decay property at infinity, 
the condition \eqref{Diri} with $q > 2$
is weaker than that in the class of the finite Dirichlet integral, namely 
with $q =2$.
Furthermore, as a byproduct of our result, we show a Liouville-type theorem.
Although our result needs an additional assumption that
$\lim_{|z| \to \infty} \omega^{\theta}(r,z) = 0$,
we may treat the case when the velocity grows at infinity.
For other Liouville-type theorems,
we refer the reader to
\cite{KoNaSeSv09}, \cite{KoPiRu15}, \cite{LeZhZh17}, \cite{Zh}, and \cite{Wa} for axisymmetric solutions,
and \cite{Ga}, \cite{Ch1}, \cite{Ch15}, \cite{ChWo}, \cite{Se}, \cite{KoTeWa17} for general cases, respectively.  
\par
\bigskip
To state our main results, we introduce some notations.
We consider the cylindrical domain
$D = \{ (r,\theta, z) \in \re_+ \times [0,2\pi) \times \re;\mbox{ } r > r_0\}$
with some constant $r_0 > 0$,
and let
$D_0 = \{ (r,z) \in \re_+ \times \re;\mbox{ } r > r_0 \}$.
By the axisymmetric velocity with no swirl, we mean that
$v^r$ and $v^z$ are independent of $\theta$ and $v^{\theta} \equiv 0$.
From this, we rewrite the equation \eqref{sns} as
\begin{align}
\label{snsaxi}
	\left\{ \begin{array}{l}
		\displaystyle (v^r \pt_r + v^z \pt_z ) v^r + \pt_r p
			= \left( \pt_r^2 + \frac{1}{r}\pt_r + \pt_z^2 - \frac{1}{r^2} \right) v^r,\\
		\displaystyle (v^r \pt_r + v^z \pt_z ) v^z + \pt_z p
			= \left( \pt_r^2 + \frac{1}{r}\pt_r + \pt_z^2 \right) v^z,\\
		\displaystyle \pt_r v^r + \frac{v^r}{r} + \pt_z v^z = 0,
	\end{array} \right.
	\quad (r, z) \in D_0.
\end{align}
Since $v^\theta =0$, it follows from (\ref{eqn:1.4}) that 
$\omega^r= \omega^z = 0$, and we see that 
$\omega^{\theta} = \pt_z v^r - \pt_r v^z$ satisfies the vorticity equation
\begin{align}
\label{eq_w}
	(v^r \pt_r + v^z \pt_z ) \omega^{\theta} - \frac{v^r}{r} \omega^{\theta}
	 = \left( \pt_r^2 + \frac{1}{r} + \pt_z^2 - \frac{1}{r^2} \right) \omega^{\theta}.
\end{align}
Moreover,
$\Omega \equiv \dfrac{\omega^{\theta}}{r}$ is subject to the identity 
\begin{align}
\label{eq_omega}
	- \left( \pt_r^2 + \pt_z^2 + \frac{3}{r} \pt_r \right) \Omega
	 	+ \left( v^r \pt_r + v^z \pt_z \right) \Omega = 0.
\end{align}
The equation \eqref{eq_omega} has a similar structure to
the vorticity equation of the 2-dimensional Naver-Stokes equations.
In particular, every solution $\Omega$ to \eqref{eq_omega} satisfies the maximum principle, which means that for each bounded subdomain $D_1 \subset D_0$, 
 $\Omega$ attains its maximum and minimum on the boundary of $D_1$.    
In addition to the assumption \eqref{Diri}, we suppose that
there exist $k \in \re $ and $C>0$ such that
\begin{align}
\label{ass_v}
	|v(r,z)| \le C (1+|r|)^k
\end{align}
holds for all $(r,z) \in D_0$.
\par
\vspace{2mm}
Under the assumptions \eqref{Diri} and \eqref{ass_v},
we have the following asymptotic behavior of the vorticity
$\Omega$ and $\omega^{\theta}$.
%%%%%%%%%%%%%%%%%%%%%%%%%%%%%%%%%%%%%%%%%%
%%%%%%%%%%%%%%%%%%%%%%%%%%%%%%%%%%%%%%%%%%
\begin{theorem}\label{thm_asymp}
Let $(v,p)$ be a smooth axisymmetric solution of \eqref{sns} with no swirl
satisfying $\displaystyle{\int_D|\nabla v(x)|^qdx < \infty}$ with some $q \in [2,\infty)$. Assume that $v$ fulfills \eqref{ass_v}.
Then, we have
\begin{align}
\label{asym_omega}
	&\lim_{r\to \infty} r^{ 1+\frac{3}{q} - \frac{1}{2q} \max\{0,1+k\} } \sup_{z \in \re} |\Omega(r,z)| = 0,\\
\label{asym_omega_theta}
	&\lim_{r\to \infty} r^{ \frac{3}{q} - \frac{1}{2q} \max\{0,1+k\} }
		\sup_{z \in \re} |\omega^{\theta} (r,z)| = 0
\end{align}
as $r\to \infty$.
\end{theorem}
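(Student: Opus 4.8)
The plan is to reduce everything to the pointwise bound \eqref{asym_omega} for $\Omega$ and to obtain it from a scale-$R$ interior maximum estimate for the drift--diffusion equation \eqref{eq_omega}. First observe that \eqref{asym_omega_theta} is an immediate consequence of \eqref{asym_omega}: since $\omega^{\theta}=r\Omega$, multiplying the quantity in \eqref{asym_omega} by $r$ turns the exponent $1+\frac3q-\frac{1}{2q}\max\{0,1+k\}$ into $\frac3q-\frac{1}{2q}\max\{0,1+k\}$. Next I extract the basic integrability: because $|\omega^{\theta}|=|\partial_z v^r-\partial_r v^z|\le\sqrt2\,|\nabla v|$ and $dx=r\,dr\,d\theta\,dz$, the hypothesis $\int_D|\nabla v|^q\,dx<\infty$ yields $\int_{D_0}|\omega^{\theta}(r,z)|^q\,r\,dr\,dz<\infty$. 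Consequently the tail $\mathcal E(R):=\int_{\{r>R/2\}}|\omega^{\theta}|^q\,r\,dr\,dz$ tends to $0$ as $R\to\infty$, and this vanishing is what will upgrade the final estimate from $O(\cdot)$ to $o(\cdot)$.

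For the main estimate I work on two-dimensional balls $B_{R/2}(R,z_0)\subset D_0$ in the $(r,z)$ half-plane, with $R$ large and $z_0\in\re$ arbitrary. On such a ball $r$ is comparable to $R$, so \eqref{eq_omega} reads $-\Delta_{r,z}\Omega-\frac3r\partial_r\Omega+b\cdot\nabla\Omega=0$ with $b=(v^r,v^z)$: a uniformly elliptic equation with \emph{no} zeroth-order term (the maximum-principle structure noted after \eqref{eq_omega}), whose lower-order coefficient $\frac3r$ is of size $R^{-1}$ and whose drift obeys $|b|\le|v|\le CR^{k}$ by \eqref{ass_v}. Two structural facts are crucial. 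First, the third equation of \eqref{snsaxi} gives $\partial_r(rv^r)+\partial_z(rv^z)=0$, so the weighted drift $rb$ is divergence-free in $(r,z)$ and admits a stream function, $rb=\nabla^{\perp}\psi$. Second, rescaling the ball to unit size turns $b$ into an effective drift of size $\beta\sim R\cdot R^{k}=R^{1+k}$, while the $\frac3r$ term contributes only an $O(1)$ drift; hence the relevant drift strength is $\beta\sim R^{\max\{0,1+k\}}$, which is precisely the origin of the factor $\max\{0,1+k\}$ in the statement.

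I would then prove a local maximum estimate of the form $\sup_{B_{R/4}(R,z_0)}|\Omega|\le C\,\beta^{1/(2q)}\,R^{-2/q}\,\|\Omega\|_{L^q(B_{R/2}(R,z_0))}$. Inserting $\Omega=\omega^{\theta}/r$ with $r\sim R$ together with the tail bound $\|\Omega\|_{L^q(B_{R/2})}^q\le CR^{-q-1}\mathcal E(R)$, and taking the supremum over $z_0$, gives $\sup_{z\in\re}|\Omega(R,z)|\le C\,R^{-1-\frac3q+\frac{1}{2q}\max\{0,1+k\}}\,\mathcal E(R)^{1/q}$; since $\mathcal E(R)\to0$ this is exactly \eqref{asym_omega}, and the conclusion is uniform in $z$ because $\mathcal E(R)$ does not depend on $z_0$.

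The hard part is the sharp power $\beta^{1/(2q)}$ in the local maximum estimate. A direct Moser iteration or De Giorgi argument using only $|b|\le\beta$ produces the weaker constant $\beta^{1/q}$, which yields a strictly smaller decay exponent than claimed; indeed the first iteration step alone already costs a factor $\beta^{1/q}$, so this route cannot be improved by mere bookkeeping. To gain the missing factor $\tfrac12$ I would exploit the divergence-free structure: in the energy inequality the drift contributes $-\frac1p\int|u|^{2p}\,\eta\,b\cdot\nabla\eta$, and writing $b=\frac1r\nabla^{\perp}\psi$ and integrating by parts once more replaces the crude $\|b\|_{\infty}$ by a bound in terms of the oscillation of the stream function $\psi$ (equivalently, a gradient-form control of the drift). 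Carrying this refined bound through the iteration halves the accumulated power of $\beta$ and delivers $\beta^{1/(2q)}$. Establishing this improved drift dependence, rather than the exponent computation, is where the real difficulty lies.
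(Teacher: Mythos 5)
Your reduction of \eqref{asym_omega_theta} to \eqref{asym_omega}, the extraction of $\int_{D_0}|\omega^{\theta}|^q\,r\,drdz<\infty$, and the final exponent bookkeeping are all correct: \emph{if} the local maximum estimate $\sup_{B_{R/4}}|\Omega|\le C\,\beta^{1/(2q)}R^{-2/q}\|\Omega\|_{L^q(B_{R/2})}$ with $\beta\sim R^{\max\{0,1+k\}}$ were available, \eqref{asym_omega} would follow. But that estimate carries the entire content of the theorem, and you do not prove it; you yourself identify it as ``where the real difficulty lies.'' The sketch you offer --- writing $rb=\nabla^{\perp}\psi$ and integrating by parts once more inside a Moser iteration to ``halve the accumulated power of $\beta$'' --- is a heuristic, not an argument: you do not explain how the oscillation of $\psi$ over the ball is controlled (bounding it requires bounds on $v$ itself, which only returns you to $\|b\|_{\infty}\le CR^{k}$), nor why the total power accumulated over infinitely many iteration steps lands at exactly $1/(2q)$. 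Even your baseline claim that naive Moser gives $\beta^{1/q}$ is doubtful: with the divergence-free structure each Caccioppoli step costs a factor $(1+\beta)$, and the accumulated exponent $\sum_j p_j^{-1}$ of a two-dimensional iteration starting from $L^{q}$ is at least $2/q$ (and degrades further because the 2D Sobolev constant blows up as the per-step exponent gain is pushed to infinity). So the crux of the proof is missing, and the proposed repair is not substantiated and, in my judgement, unlikely to yield the sharp constant.

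For comparison, the paper obtains the half-power by a different mechanism that avoids local maximum estimates entirely. It first proves a global weighted Caccioppoli inequality for $|\Omega|^q$ (Lemma \ref{lem_en_est}), namely $\int_{D_1}r^{\alpha}|\Omega|^{q-2}|\nabla_{r,z}\Omega|^2\,drdz<\infty$ for $\alpha\le\min\{q+3,\,q+2-k\}$; the constraint $k+\alpha-1\le q+1$ coming from the drift term is where $\max\{0,1+k\}$ enters. It then selects good circles $r_n\in[2^n,2^{n+1}]$ by the mean value theorem, bounds $\sup_{z}|f(r_n,z)|^q$ via the fundamental theorem of calculus by $\int|f|^{q-1}|\nabla f|\,dz$, and the Cauchy--Schwarz inequality between the weighted $L^q$ integral and the weighted gradient integral is precisely what produces the exponent $\frac{\alpha+q+3}{2q}$, i.e.\ the factor $\frac{1}{2q}$. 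The maximum principle is then used only to propagate the decay from good circles and good horizontal lines to all of $[r_n,r_{n+2}]\times\re$. To complete your argument you would need either a genuine proof of the $\beta^{1/(2q)}$ drift dependence or to switch to this energy-plus-good-circle scheme.
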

%%%%%%%%%%%%%%%%%%%%%%%%%%%%%%%%%%%%%%%%%%%%%%%

\begin{remark}
{\rm 
We note that the asymptotic behavior \eqref{lipa_v} obtained by \cite{LiPa} is also true
in the outside $D$ of cylinder. 
Thus, we can choose $k$ in the assumption \eqref{ass_v} from \eqref{lipa_v}.
For example, when
$q \in (2,3)$ and $v_{\infty}^z \neq 0$, the assumption \eqref{ass_v} is valid with $k = 0$,
and when $q \in (2,3)$ and $v_{\infty}^z = 0$, the assumption \eqref{ass_v} is valid with $k = 1-\frac3q$.
}
\end{remark}

\begin{remark}
{\rm (i)
Compared with the estimates of $\omega^{\theta}$ in \eqref{est_CaPaZh},
we note that the above theorem in the case where $D=\re^3$ and $q = 2$ gives a better decay.
Indeed, we already know
$|v(x)| = O( r^{-\frac12} \sqrt{\log r})$
from the first estimate of \eqref{est_CaPaZh}.
Thus, $v$ satisfies the assumption \eqref{ass_v} with $k = - \frac12 + \varepsilon$ for arbitrary  small $\varepsilon > 0$.
Then, the estimate \eqref{asym_omega_theta} imples
$|\omega^{\theta}(r,z)| = o ( r^{-\frac{11}{8}+ \frac{\varepsilon}{4}} )$,
which is slightly better than that of \eqref{est_CaPaZh}.
As we pointed out, in the case of no swirl, both $\omega^r$ and $\omega^z$ vanish,
and we emphasize that our result is valid
not only in $\mathbb{R}^3$ but also in exterior domains of cylinders.
\par
%\noindent
(ii)
Let us compare our result with that of \cite{LiPa}.
We first note that the estimates of $\omega^{\theta}$ in \eqref{lipa_om1} and \eqref{lipa_om2}
are also true in the outside $D$ of the cylinder, 
while the estimates of $\omega_r$ and $\omega_z$ are proved only in $\mathbb{R}^3$.
In comparison with the estimates \eqref{lipa_om1} and \eqref{lipa_om2} for $q \in (2,\infty)$,
we note that Theorem \ref{thm_asymp} gives slightly better estimates.
Indeed, for example, when $q \in (2,3)$ and $v_{\infty}^z \neq 0$ in \eqref{lipa_v},
the assumption \eqref{ass_v} is valid for $k = 0$.
Then, Theorem \ref{thm_asymp} implies
$|\omega^{\theta}(r,z)| = o(r^{-\frac{5}{2q}} )$,
while \eqref{lipa_om2} shows $|\omega^{\theta}(r,z)| = O(r^{-\frac2q + \varepsilon})$.
The other cases are also discussed similarly.
}
\end{remark}
\par
As a byproduct of Theorem \ref{thm_asymp},
by the maximum principle to $\Omega$,
we immediately have the following Liouville-type theorem.
\begin{corollary}\label{cor_liouville}
Let $D = \re^3$ and
let $(v,p)$ be a smooth axisymmetric solution of \eqref{sns} with no swirl
having the finite generalized Dirichlet integral as in (\ref{Diri})
for  some $q \in [2,\infty)$ with the condition 
 \eqref{ass_v} for some $k \le 2q + 5$.
Moreover, for every $r >0$  we assume that 
$\lim_{|z|\to \infty} |\omega^{\theta} (r,z)| = 0$. 
Then, we have
$\omega^{\theta} \equiv 0$, and hence $v$ is harmonic on $\re^3$.
\end{corollary}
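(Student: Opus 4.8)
The plan is to show that the rescaled vorticity $\Omega=\omega^{\theta}/r$ vanishes identically by combining the decay estimate \eqref{asym_omega} of Theorem \ref{thm_asymp} with the maximum principle for the elliptic equation \eqref{eq_omega}. Once $\Omega\equiv 0$ we have $\omega^{\theta}\equiv 0$, and since the no-swirl assumption already gives $\omega^r=\omega^z=0$, the full vorticity $\omega=\rot v$ vanishes; together with $\diver v=0$ this yields $-\Delta v=\rot\rot v-\nabla\diver v=0$, so $v$ is harmonic on $\re^3$.

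First I would record the two pieces of information on the behaviour of $\Omega$ at infinity. In \eqref{asym_omega} the relevant exponent is $\alpha:=1+\tfrac{3}{q}-\tfrac{1}{2q}\max\{0,1+k\}$, and the hypothesis $k\le 2q+5$ gives $1+k\le 2q+6$, hence $\max\{0,1+k\}\le 2q+6$ and therefore $\alpha\ge 0$; consequently $r^{\alpha}\sup_{z}|\Omega(r,z)|\to 0$ forces $\sup_{z}|\Omega(r,z)|\to 0$ as $r\to\infty$. On the other hand, the standing assumption $\lim_{|z|\to\infty}|\omega^{\theta}(r,z)|=0$ gives, for each fixed $r>0$, that $\Omega(r,z)=\omega^{\theta}(r,z)/r\to 0$ as $|z|\to\infty$.

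Next I would run the maximum principle. It is convenient to read the operator $\pt_r^2+\pt_z^2+\tfrac{3}{r}\pt_r$ as the radial-plus-$z$ part of the Laplacian in $\re^5$ acting on functions depending only on $(r,z)$; in this picture the axis $r=0$ is an interior point, $\Omega$ is a smooth axisymmetric solution of a uniformly elliptic equation with no zeroth-order term, and the maximum principle applies on the solid cylinders $Q_{M,N}=\{r\le M,\ |z|\le N\}$, whose boundary is $\{r=M\}\cup\{|z|=N\}$. Fixing $\ep>0$, I would choose $M$ so large that $\sup_{z}|\Omega(r,z)|<\ep$ for $r\ge M$; the maximum principle then gives $\sup_{Q_{M,N}}\Omega\le\max\{\ep,\ \sup_{r\le M}\Omega(r,\pm N)\}$. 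Letting $N\to\infty$, then $\ep\to 0$, and arguing symmetrically for the infimum would yield $\Omega\equiv 0$, provided one knows that $\sup_{0\le r\le M}|\Omega(r,\pm N)|\to 0$ as $N\to\infty$.

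The main obstacle is exactly this last uniformity: the assumption only provides $\Omega(r,z)\to 0$ as $|z|\to\infty$ pointwise in $r$, whereas the boundary term above needs the convergence to be uniform for $r\in[0,M]$. I would upgrade pointwise to uniform convergence by an equicontinuity argument. On the strip $\{r\le M\}$ the drift coefficients $v^r,v^z$ of \eqref{eq_omega} are bounded, since \eqref{ass_v} gives $|v|\le C(1+M)^k$ there; hence the translates $\Omega_N(r,z):=\Omega(r,z+N)$ solve elliptic equations with uniformly bounded coefficients and are locally uniformly bounded, so interior elliptic estimates render $\{\Omega_N\}$ equicontinuous on compact subsets. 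Since $\Omega_N\to 0$ pointwise, a normal-families (Arzel\`a--Ascoli) argument forces $\Omega_N\to 0$ uniformly on compact sets, and in particular $\sup_{0\le r\le M}|\Omega(r,\pm N)|\to 0$, which closes the argument.
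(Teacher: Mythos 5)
Your proof follows the same route as the paper's: from $k\le 2q+5$ the exponent $1+\frac3q-\frac{1}{2q}\max\{0,1+k\}$ in \eqref{asym_omega} is nonnegative, so $\sup_{z}|\Omega(r,z)|\to0$ as $r\to\infty$; the hypothesis on $\omega^{\theta}$ gives $\Omega(r,z)\to0$ as $|z|\to\infty$ for each fixed $r$; and the maximum principle then forces $\Omega\equiv0$, hence $\omega^{\theta}\equiv0$ by continuity across the axis and $v$ harmonic from $\rot v=0$, $\diver v=0$. The paper's own proof is exactly this outline and nothing more. What you add is the honest execution of the maximum-principle step, and both of your additions address real points that the paper passes over silently: (i) reading \eqref{eq_omega} as a uniformly elliptic drift equation in $\re^5$ so that the axis $r=0$ is interior --- the paper only asserts the maximum principle on subdomains of $D_0$, i.e.\ away from the axis, which by itself does not suffice for a Liouville statement on all of $\re^3$; and (ii) the observation that exhausting by cylinders requires $\sup_{0\le r\le M}|\Omega(r,\pm N)|\to0$, a decay in $z$ \emph{uniform} over $r\in[0,M]$, whereas the hypothesis is only pointwise in $r$. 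The one sub-step of your repair that is itself not free is the claim that the translates $\Omega_N$ are locally uniformly bounded on the strip $\{r\le M\}$: a priori $\Omega$ is bounded only for large $r$ (by Theorem \ref{thm_asymp}) and decays in $z$ only pointwise in $r$, neither of which bounds it on an infinite strip. This can be supplied (and in fact yields the uniform decay directly, without Arzel\`a--Ascoli) by the local boundedness estimate $\sup_{B_1}|\Omega|\le C\|\Omega\|_{L^q(B_2)}$ for the $\re^5$ drift equation combined with the integrability $\int r^{q+1}|\Omega|^q\,drdz<\infty$ from \eqref{Diri}, whose tail over $\{|z|\ge N\}$ vanishes; only near the axis, where for $q>2$ the weight $r^{q+1}$ is weaker than the $\re^5$ volume weight $r^{3}$, does one need an extra word. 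In short: same strategy as the paper, carried out with more care, with one remaining technical assertion to justify.
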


%%%%%%%%%%%%%%%%%%%%%%%%%%%%%%%%%%%%%%%%%%%%%%%%%
\section{Proof of Theorem \ref{thm_asymp}}
In what follows, we denote by $C$ generic constants.
In particular, $C = C(*,\ldots,*)$
denotes constants depending only on the quantities appearing in the parenthesis.
We sometimes use the symbols
$\diver_{r,z}, \nabla_{r,z}$, and $\Delta_{r,z}$,
which mean the differential operators defined by
$\diver_{r,z} (f_1, f_2) (r,z) = \partial_r f_1(r,z) + \partial_z f_2(r,z)$,
$\nabla_{r,z} f(r,z) = (\partial_r f, \partial_z f)(r,z)$,
and
$\Delta_{r,z} f(r,z) = \partial_r^2 f (r,z) + \partial_z^2 f(r,z)$,
respectively.

%%%%%%%%%%%%%%%%%%%%%%%%%%%%%%%%%%%%%%%
\subsection{$L^q$-energy estimates}

Since
\begin{align*}
	|\nabla_x v(x)|^2 = | \nabla_{r,z} v |^2 + \frac{1}{r^2} |v^r(r,z)|^2
\end{align*}
for the axisymmetric vector field $v$ without swirl,
we first note that the condition \eqref{Diri} implies
\begin{align*}
	\infty > \int_D |\nabla v(x)|^q \,dx
		\ge C \int_{D_0} \left[ |\nabla_{r,z} v (r,z)|^q + r^{-q} |v^r (r,z)|^q \right] r \,drdz,
\end{align*}
and hence, $\omega^{\theta}$ and $\Omega$ satisfy
\begin{align}
\label{Diri_o}
	\int_{D_0} |\omega^{\theta}(r,z)|^q r \,drdz < \infty,\quad
	\int_{D_0} r^{q+1} | \Omega(r,z) |^q \,drdz < \infty,
\end{align}
respectively.
\par
\vspace{2mm}
From the above bound of $\Omega$ and the equation \eqref{eq_omega},
we prove the following estimate.
%%%%%%%%%%%%%%%%%%%%%%%%%%%%
\begin{lemma}\label{lem_en_est}
Suppose the assumptions of Theorem \ref{thm_asymp}.
Let $r_1 > r_0$ and $D_1 = \{ (r,z) \in \re_+ \times \re ; \mbox{ }r \ge r_1 \}$.
Let $\alpha \le \min\{ q + 3, q+2-k \}$.
Then, we have
\begin{align*}
	\int_{D_1} r^{\alpha} |\Omega(r,z)|^{q-2} |\nabla \Omega(r,z)|^2 \,drdz
	\le C \int_{D_0} r^{q+1} |\Omega(r,z)|^q \,drdz, 
\end{align*}
where $C = C(q, \alpha, k, r_0, r_1)$.  
\end{lemma}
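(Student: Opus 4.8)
The plan is to derive this weighted energy inequality by testing the equation \eqref{eq_omega} for $\Omega$ against a suitable test function and integrating by parts, controlling the boundary and lower-order terms using the a priori bound $\int_{D_0} r^{q+1}|\Omega|^q\,drdz < \infty$ from \eqref{Diri_o} together with the growth assumption \eqref{ass_v}. Since the target integrand involves $|\Omega|^{q-2}|\nabla\Omega|^2$, the natural test function is $\phi = r^{\alpha}\chi^2 |\Omega|^{q-2}\Omega$, where $\chi = \chi(r)$ is a smooth cutoff localizing to large $r$ (to handle the inner boundary $r=r_1$ cleanly) and possibly to a large annulus $r\le R$, after which I would send $R\to\infty$. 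Multiplying \eqref{eq_omega} by $\phi$ and integrating over $D_0$, the principal elliptic term $-(\partial_r^2+\partial_z^2+\tfrac{3}{r}\partial_r)\Omega$ should, after integration by parts, produce the desired good term $\int r^\alpha |\Omega|^{q-2}|\nabla\Omega|^2$ (up to the positive factor $(q-1)$ coming from differentiating $|\Omega|^{q-2}\Omega$), plus error terms in which derivatives land on $r^\alpha$ or $\chi$.

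First I would organize the integration by parts carefully, noting that the operator $\partial_r^2 + \tfrac{3}{r}\partial_r + \partial_z^2$ is the radial Laplacian in a weighted sense: writing it as $r^{-3}\partial_r(r^3\partial_r\cdot) + \partial_z^2$ suggests that $r^3\,drdz$ is the natural measure, so I would expect the clean choice of weight to interact with the factor $r^\alpha$ and produce a term like $-\alpha(\alpha+2)\int r^{\alpha-2}|\Omega|^q$ from the commutator with $r^\alpha$. Here is where the constraint $\alpha \le q+3$ enters: after Young's inequality, the leftover terms of the form $\int r^{\alpha-2}|\Omega|^q$ must be dominated by $\int r^{q+1}|\Omega|^q$, which requires $\alpha - 2 \le q+1$, i.e.\ $\alpha \le q+3$. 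Second, the convection term $(v^r\partial_r + v^z\partial_z)\Omega$ tested against $\phi$ would be handled by moving the derivative onto the test function and using $\diver_{r,z}$-type structure together with the divergence-free condition $\partial_r v^r + v^r/r + \partial_z v^z = 0$ in \eqref{snsaxi}; this should allow me to integrate the convection term and reduce it to a term where $\nabla(r^\alpha)$ or the cutoff derivative produces factors of $|v|\sim r^k$, and it is precisely this that forces the second constraint $\alpha\le q+2-k$ (so that $\int r^{\alpha-1+k}|\Omega|^q \le \int r^{q+1}|\Omega|^q$).

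The main obstacle I expect is the treatment of the convection term and of the boundary/cutoff terms at $r\to\infty$. For the convection term, since $v$ may grow like $r^k$ with $k$ possibly positive, I must arrange the integration by parts so that the growth of $v$ is always paired with enough decay from the $|\Omega|^q$-weight; the divergence-free condition is essential to avoid an uncontrolled $\int |v||\Omega|^q$ term, and verifying that the surviving convection contribution genuinely satisfies the exponent bound $\alpha - 1 + k \le q+1$ is the delicate bookkeeping. To justify the limit $R\to\infty$ rigorously, I would first prove the estimate with a cutoff $\chi_R$ supported in $r\le 2R$ and equal to $1$ on $r\le R$, bound all $\chi_R'$-terms by $R^{-1}$ times integrals of $r^{\alpha}|\Omega|^{q-2}|\nabla\Omega|\,|\Omega|$ over the annulus $R\le r\le 2R$, and absorb these via Young's inequality into the good term plus a controlled $\int r^{q+1}|\Omega|^q$ remainder; the finiteness of the right-hand side then guarantees that the annular contributions vanish as $R\to\infty$, yielding the stated inequality with a constant $C=C(q,\alpha,k,r_0,r_1)$.
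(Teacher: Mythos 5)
Your proposal is correct and follows essentially the same route as the paper: the paper tests the equation for $\Omega$ with $h(\Omega)=|\Omega|^q$ times the weight $r^{\alpha}\eta_R$ (a Gilbarg--Weinberger-type divergence identity), which is the same computation as your pairing with $\phi=r^{\alpha}\chi^2|\Omega|^{q-2}\Omega$, and the two constraints $\alpha\le q+3$ and $\alpha\le q+2-k$ arise exactly as you predict, from the leftover terms $\int r^{\alpha-2}|\Omega|^q$ and $\int r^{\alpha-1+k}|\Omega|^q$ respectively. The one point to add is that the cutoff must also truncate in the $z$-direction (the paper uses a factor $\xi_2(|z|/R)$), since $D_0$ is an infinite strip and no a priori decay of $\Omega$ as $|z|\to\infty$ is available to kill the boundary terms there; the same finiteness-plus-dominated-convergence argument you describe for the $r$-annulus disposes of these contributions.
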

%%%%%%%%%%%%%%%%%%%%%%%%%%%%
\begin{proof}
Let
$r_1, r_2$ be
$r_2 > r_1 > r_0$, and let
$\xi_1(r), \xi_2(r) \in C^{\infty}((0,\infty))$
be nonnegative functions satisfying
\begin{align*}
	\xi_1(r) = \begin{cases} 1 &(r \ge r_2),\\ 0 &(r_0< r \le r_1), \end{cases}\quad
	\xi_2(r) = \begin{cases} 1 &(0\le r \le 1/2),\\ 0 &(r \ge 1). \end{cases}
\end{align*}
For $R > 0$, we define a test function
\begin{align*}
	\eta_R(r, z) = \xi_1(r) \xi_2 \left( \frac{r}{R} \right) \xi_2 \left( \frac{|z|}{R} \right).
\end{align*}
Then, we see that
\begin{align}
\label{est_eta}
	|\nabla_{r,z} \eta_R(r,z)| \le C (|\xi_1'(r)| + R^{-1}), \quad
	|\Delta_{r,z} \eta_R(r,z)| \le C (|\xi_1''(r)| + R^{-1}|\xi_1'(r)| + R^{-2} ).
\end{align}

Let $h= h(\Omega)$ be a $C^2$ function determined later.
We start with the following identity (see also \cite[p.385]{GiWe78}):
\begin{align*}
	&\diver_{r,z} \left[ (r^{\alpha} \eta_R) \nabla_{r,z}h(\Omega)
					- h(\Omega) \nabla_{r,z} (r^{\alpha} \eta_R )
					- (r^{\alpha} \eta_R) h(\Omega) v \right] \\
	&= r^{\alpha} \eta_R h''(\Omega) |\nabla_{r,z} \Omega|^2
		- h(\Omega) \left[ \Delta_{r,z} (r^{\alpha} \eta_R)
						+ v \cdot \nabla_{r,z} (r^{\alpha} \eta_R ) \right] \\
	&\quad + r^{\alpha} \eta_R h'(\Omega) \left[ \Delta_{r,z} \Omega - v \cdot \nabla_{r,z} \Omega \right]
		- r^{\alpha} \eta_R h(\Omega) \diver_{r,z} v.
\end{align*}
By the equation \eqref{eq_omega}, we have
\begin{align*}
	& r^{\alpha} \eta_R h'(\Omega) \left[ \Delta_{r,z} \Omega - v \cdot \nabla_{r,z} \Omega \right] \\
	&= r^{\alpha} \eta_R h'(\Omega) \left( - \frac{3}{r} \partial_r \Omega \right) \\
	&= - 3 \partial_r \left( r^{\alpha-1} \eta_R h(\Omega) \right)
		+ 3 \partial_r \left( r^{\alpha-1} \eta_R \right) h(\Omega).
\end{align*}
Also, from the third line of \eqref{snsaxi}, we obtain
$- r^{\alpha} \eta_R h(\Omega) \diver_{r,z} v = r^{\alpha-1} \eta_R h(\Omega) v^r$.
These observations with a straightforward computation lead to
\begin{align*}
	&\diver_{r,z} \left[ (r^{\alpha} \eta_R) \nabla_{r,z}h(\Omega)
					- h(\Omega) \nabla_{r,z} (r^{\alpha} \eta_R )
					- (r^{\alpha} \eta_R) h(\Omega) v \right]
					+ 3 \partial_r (r^{\alpha-1} \eta_R h(\Omega)) \\
	&= r^{\alpha} \eta_R h''(\Omega) |\nabla_{r,z} \Omega|^2
		- h(\Omega) \left[ \Delta_{r,z} (r^{\alpha} \eta_R)
						+ v \cdot \nabla_{r,z} (r^{\alpha} \eta_R ) \right] \\
	&\quad
		+ 3 \partial_r (r^{\alpha-1}\eta_R ) h(\Omega)
		+ r^{\alpha-1} \eta_R h(\Omega) v^r\\ 
	&= r^{\alpha} \eta_R h''(\Omega) |\nabla_{r,z} \Omega|^2
		- h(\Omega) \left[ r^{\alpha} \Delta_{r,z} \eta_R + (2\alpha-3) r^{\alpha-1} \partial_r \eta_R \right] \\
	&\quad - h(\Omega) \left[ r^{\alpha} v \nabla_{r,z} \eta_R + (\alpha-1) r^{\alpha-1} v^r \eta_R \right]
		-(\alpha-3)(\alpha-1) h(\Omega) r^{\alpha-2} \eta_R.
\end{align*}
Taking $h(\Omega) = |\Omega|^q$ 
and integrating the above identity over
$D_0$,
we deduce
\begin{align*}
	&q(q-1) \int_{D_0} r^{\alpha} \eta_R |\Omega(r,z)|^{q-2} |\nabla_{r,z} \Omega(r,z) |^2 \,drdz \\
	&= \int_{D_0} |\Omega(r,z)|^q
		\left[ r^{\alpha} \Delta_{r,z} \eta_R + (2\alpha+3) r^{\alpha-1} \partial_r \eta_R \right] \, drdz \\
	&\quad + \int_{D_0}  |\Omega(r,z)|^q
		\left[ r^{\alpha} v \nabla_{r,z} \eta_R + (\alpha-1) r^{\alpha-1} v^r \eta_R \right] \,drdz \\
	&\quad + (\alpha+3)(\alpha-1) \int_{D_0} |\Omega(r,z)|^q r^{\alpha-2} \eta_R \,drdz.
\end{align*}
Applying \eqref{est_eta} and the assumption \eqref{ass_v},
we have by the property of the support of $\eta_R$ and its derivatives that  
\begin{align*}%
	&\int_{D_0} r^{\alpha} \eta_R |\Omega(r,z)|^{q-2} |\nabla_{r,z} \Omega(r,z) |^2 \,drdz \\
	&\le C \sum_{l=0}^2 R^{-l} \int_{r_0<r<R}\int_{-\infty}^\infty  r^{\alpha-2 + l} |\Omega(r,z)|^q \,drdz \\
	&\quad + C R^{-1} \int_{r_0< r < R}\int_{-\infty}^\infty  r^{k+\alpha} |\Omega(r,z)|^q \,drdz 
		+ C \int_{D_0} r^{k+\alpha -1} |\Omega(r,z)|^q \,drdz \\
	&\quad + C \int_{r_1<r<r_2}\int_{-\infty}^\infty  |\Omega(r,z)|^q \,drdz \\
	&\le C \int_{D_0} r^{\alpha-2} |\Omega(r,z)|^q \,drdz 
		+ C \int_{D_0} r^{k+\alpha-1} |\Omega(r,z)|^q \,drdz.
\end{align*}%
From the assumption $\alpha \le \min\{ q + 3, q+2-k \}$,
we obtain
\begin{align*}%
	&\int_{D_0} r^{\alpha} \eta_R |\Omega(r,z)|^{q-2} |\nabla_{r,z} \Omega(r,z) |^2 \,drdz 
	\le C \int_{D_0} r^{q+1} |\Omega(r,z)|^q \,drdz.
\end{align*}%
Finally, since the left-hand side is bounded from below by
\begin{align*}%
	\int_{D_1} r^{\alpha} |\Omega(r,z)|^{q-2} |\nabla_{r,z} \Omega(r,z) |^2 \,drdz, 
\end{align*}%
we conclude that 
\begin{align*}%
	\int_{D_1} r^{\alpha} |\Omega(r,z)|^{q-2} |\nabla_{r,z} \Omega(r,z) |^2 \,drdz
	\le C \int_{D_0} r^{q+1} |\Omega(r,z)|^q \,drdz.
\end{align*}%
This completes the proof of Lemma \ref{lem_en_est}.
\end{proof}

\subsection{Pointwise behavior via maximum principle}

The condition \eqref{Diri_o} and Lemma \ref{lem_en_est} give the boundedness
\begin{align*}%
	\int_{D_0} r^{q+1} |\Omega(r,z)|^q \,drdz < \infty,\quad
	\int_{D_1} r^{\alpha} |\Omega(r,z)|^{q-2} |\nabla \Omega(r,z)|^2 \,drdz < \infty
\end{align*}%
with $\alpha \le \min\{q+3, q+2-k\}$.  
The following proposition shows that
the above bounds with the maximum principle yield a pointwise behavior of $\Omega$
as $r \to \infty$.

%%%%%%%%%%%%%%%%%%%%%%%%%%%%%%%%%%%%%
\begin{proposition}\label{prop_pt}
Let $r_1 > 0$, $D_1 = \{ (r,z) \in \re^+ \times \re ;\mbox{ } r > r_1 \}$,
and let
$f = f(r,z) \in C^{\infty}(D_1)$ satisfy
\begin{align}%
\label{ass_f}
	\int_{D_1} r^{\alpha} | f (r,z)|^{q-2} |\nabla_{r,z} f(r,z) |^2 \,drdz
	+ \int_{D_1} r^{q+1} | f(r,z)|^q \,drdz < \infty
\end{align}%
with some
$q \in [2,\infty)$ and $\alpha \le q + 3$.
Moreover, we assume that $f$ satisfies the maximum principle, that is,
for each bounded domain $D \subset D_1$,
the function $f|_{\bar D}$ does not attain its maximum or minimum value in 
the interior of $D$.
Then, we have
\begin{align*}
	\lim_{r\to \infty} r^{\frac{\alpha+q+3}{2q}} \sup_{z \in \re} |f(r,z)| = 0.
\end{align*}
\end{proposition}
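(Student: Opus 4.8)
The plan is to convert the two weighted integral bounds in \eqref{ass_f} into a pointwise estimate by a one–dimensional Agmon inequality in the $z$–variable, to extract decay along a dyadic sequence of radii, and finally to fill the gaps between these radii using the maximum principle. Write $M(r)=\sup_{z\in\re}|f(r,z)|$ and set
\[
I_1(r)=\int_{-\infty}^\infty |f(r,z)|^q\,dz,\qquad
I_2(r)=\int_{-\infty}^\infty |f(r,z)|^{q-2}|\nabla_{r,z}f(r,z)|^2\,dz,
\]
so that \eqref{ass_f} reads $\int_{r_1}^\infty r^{q+1}I_1(r)\,dr<\infty$ and $\int_{r_1}^\infty r^{\alpha}I_2(r)\,dr<\infty$. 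First I would observe that for a.e.\ $r$ the function $u=|f(r,\cdot)|^{q/2}$ lies in $H^1(\re)$, with $\|u\|_{L^2_z}^2=I_1(r)$ and $\|u'\|_{L^2_z}^2\le \tfrac{q^2}{4}I_2(r)$; the inequality $\sup_z|u|^2\le 2\|u\|_{L^2_z}\|u'\|_{L^2_z}$ then gives the pointwise–in–$r$ bound
\[
M(r)^q=\sup_{z}|f(r,z)|^q\le q\,I_1(r)^{1/2}I_2(r)^{1/2}.
\]

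Set $\beta=\frac{\alpha+q+3}{2q}$. Next I would integrate this bound over a dyadic annulus $r\in[\rho,2\rho]$ and apply the Cauchy--Schwarz inequality together with the weights, obtaining
\[
\int_\rho^{2\rho}M(r)^q\,dr
\le q\Big(\int_\rho^{2\rho}I_1\Big)^{1/2}\Big(\int_\rho^{2\rho}I_2\Big)^{1/2}
\le q\,\rho^{-\frac{q+1+\alpha}{2}}\,E_1(\rho)^{1/2}E_2(\rho)^{1/2},
\]
where $E_1(\rho)=\int_\rho^\infty r^{q+1}I_1\,dr$ and $E_2(\rho)=\int_\rho^\infty r^{\alpha}I_2\,dr$ tend to $0$ as $\rho\to\infty$ by hypothesis. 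Choosing $r^*=r^*(\rho)\in[\rho,2\rho]$ that realizes the mean value $M(r^*)^q\le \rho^{-1}\int_\rho^{2\rho}M^q$ gives $M(r^*)^q\le q\,\rho^{-(q+3+\alpha)/2}(E_1E_2)^{1/2}$, i.e.\ $(r^*)^{\beta}M(r^*)\le C\,(E_1(\rho)E_2(\rho))^{1/(2q)}\to0$. Taking $\rho=\rho_n:=2^nr_1$ would then produce a sequence $r_n^*\in[\rho_n,2\rho_n]$ with $(r_n^*)^{\beta}M(r_n^*)\to0$.

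The final step upgrades convergence along $\{r_n^*\}$ to a genuine limit, and this is where the maximum principle is indispensable. I would first establish that for $\rho_1<\rho_2$ one has $M(r)\le\max\{M(\rho_1),M(\rho_2)\}$ for every $r\in[\rho_1,\rho_2]$: applying the maximum principle to $f$ and to $-f$ on the rectangles $(\rho_1,\rho_2)\times(-Z,Z)$ forces $\sup|f|$ over the rectangle to be bounded by $\max\{M(\rho_1),M(\rho_2),\sup_{\rho_1\le r\le\rho_2}|f(r,\pm Z)|\}$, and letting $Z\to\infty$ along a suitable sequence makes the last term disappear. Since $r_n^*<r_{n+1}^*$, $r_{n+1}^*\le 4r_n^*$, and $\bigcup_n[r_n^*,r_{n+1}^*]\supset[r_0^*,\infty)$, for $r\in[r_n^*,r_{n+1}^*]$ this no–interior–maximum property yields $r^{\beta}M(r)\le 4^{\beta}\max\{(r_n^*)^{\beta}M(r_n^*),(r_{n+1}^*)^{\beta}M(r_{n+1}^*)\}\to0$, which is exactly the asserted limit.

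The Agmon and Cauchy--Schwarz estimates are routine; I expect the genuine difficulty to lie in two places. The first is justifying the maximum principle on the \emph{unbounded} strip, i.e.\ proving the uniform decay $\sup_{\rho_1\le r\le\rho_2}|f(r,\pm Z)|\to0$ as $Z\to\infty$, which I would deduce from the finiteness of the two integrals by regarding $|f|^{q/2}$ as an $H^1$ function on the strip and combining a one–dimensional trace argument in $z$ with a Sobolev estimate in $r$. The second is the passage from decay along the dyadic sequence to the full limit, which succeeds precisely because the maximum principle controls $M$ on each interval by its endpoint values.
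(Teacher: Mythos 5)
Your argument is correct and follows essentially the same route as the paper: an interpolation/mean-value argument in $z$ and over dyadic $r$-intervals produces a sequence of radii along which $r^{(\alpha+q+3)/(2q)}\sup_z|f|$ tends to zero, a trace-type argument in $z$ kills the lateral boundary of large rectangles, and the maximum principle fills in the gaps between consecutive good radii. The only cosmetic differences are that you package the $z$-interpolation as Agmon's inequality before averaging in $r$ (the paper does it in the reverse order, via the fundamental theorem of calculus in $z$ followed by averaging over $z_2\in[-r_n,r_n]$), and that you dispose of the $z=\pm Z$ sides with an unweighted trace estimate on a fixed strip rather than the paper's explicitly constructed weighted sequence $\{z_m\}$.
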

%%%%%%%%%%%%%%%%%%%%%%%%%%%%%%%%%%%%%
In order to prove this proposition,
we first show that the bounds \eqref{ass_f} leads to
a pointwise behavior along with a certain sequence
$\{ r_n \}_{n=1}^{\infty}$ satisfying $\lim_{n\to \infty} r_n = \infty$.

%%%%%%%%%%%%%%%%%%%%%%%%%%%%%%%%%%%%%
\begin{lemma}\label{lem_pt_est}
Let $r_1 > 0$, $D_1 = \{ (r,z) \in \re^+ \times \re ;\mbox{ }r > r_1 \}$,
and let
$f = f(r,z) \in C^{\infty}(D_1)$ satisfy the condition \eqref{ass_f}
with some
$q \in [2,\infty)$ and $\alpha \le q + 3$.
Then, there exists a sequence
$\{ r_n \}_{n=1}^{\infty}$ satisfying $\lim_{n\to \infty} r_n = \infty$
such that
\begin{align*}%
	\lim_{n\to \infty} r_n^{\frac{\alpha+q+3}{2q}} \sup_{z \in \re} | f(r_n, z) | = 0.
\end{align*}%
\end{lemma}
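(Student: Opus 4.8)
The plan is to reduce the two-dimensional weighted bounds \eqref{ass_f} to a one-dimensional Sobolev (Agmon) inequality in the $z$ variable for each fixed $r$, and then to extract a good radius sequence from the resulting weighted integrability. For fixed $r$ set
$$
	I_0(r) = \int_{\re} |f(r,z)|^q \,dz, \qquad
	I_1(r) = \int_{\re} |f(r,z)|^{q-2} |\nabla_{r,z} f(r,z)|^2 \,dz,
$$
so that \eqref{ass_f} reads $\int_{r_1}^{\infty} r^{q+1} I_0(r)\,dr < \infty$ and $\int_{r_1}^{\infty} r^{\alpha} I_1(r)\,dr < \infty$. First I would apply the elementary inequality $\|\phi\|_{L^\infty(\re)}^2 \le 2 \|\phi\|_{L^2(\re)} \|\phi'\|_{L^2(\re)}$ for $\phi \in H^1(\re)$ to the choice $\phi(z) = |f(r,z)|^{q/2}$. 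Since $q \ge 2$, the chain rule gives $|\partial_z \phi| \le \tfrac{q}{2}\,|f|^{q/2-1}|\partial_z f|$ almost everywhere, whence $\|\phi'\|_{L^2}^2 \le \tfrac{q^2}{4} I_1(r)$, while $\|\phi\|_{L^2}^2 = I_0(r)$. This yields, at every $r$ for which $I_0(r), I_1(r) < \infty$,
$$
	\sup_{z\in\re} |f(r,z)|^q = \|\phi\|_{L^\infty}^2 \le q\, I_0(r)^{1/2} I_1(r)^{1/2}.
$$

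Next I would insert the target weight. Writing $\beta = \frac{\alpha+q+3}{2q}$, the key bookkeeping is the exponent identity $\beta q = \tfrac{q+1}{2} + \tfrac{\alpha}{2} + 1$; combining it with Cauchy--Schwarz and the arithmetic--geometric mean inequality gives
$$
	r^{\beta q}\, I_0(r)^{1/2} I_1(r)^{1/2}
	= r \cdot \left( r^{q+1} I_0(r)\right)^{1/2}\left( r^{\alpha} I_1(r)\right)^{1/2}
	\le \tfrac{1}{2}\, r\, g(r),
$$
where $g(r) := r^{q+1} I_0(r) + r^{\alpha} I_1(r)$. By \eqref{ass_f} the function $g$ is integrable on $(r_1,\infty)$. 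Thus $r^{\beta q} \sup_z |f(r,z)|^q \le \tfrac{q}{2}\, r\, g(r)$, and it remains only to produce radii on which $r\,g(r)$ is small.

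Finally I would extract the sequence by a dyadic averaging argument, which simultaneously guarantees finiteness of the slice integrals. For each integer $j$ with $2^j \ge r_1$, let $J_j = [2^j, 2^{j+1}]$; since $\int_{J_j} g\,dr \to 0$ as $j \to \infty$ (being the tail of a convergent integral), there is a point $r_j \in J_j$ at which $g$ does not exceed its average over $J_j$, i.e. $g(r_j) \le 2^{-j}\int_{J_j} g\,dr < \infty$, and hence $r_j\, g(r_j) \le 2\int_{J_j} g\,dr \to 0$. The finiteness $g(r_j) < \infty$ forces $I_0(r_j), I_1(r_j) < \infty$, so that $|f(r_j,\cdot)|^{q/2} \in H^1(\re)$ and the inequality of the first step is legitimate at $r = r_j$. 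Chaining the three displays gives $r_j^{\beta q} \sup_z |f(r_j,z)|^q \to 0$, that is $r_j^{\beta} \sup_z |f(r_j,z)| \to 0$, which is the assertion with $r_n := r_j$.

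The step I expect to require the most care is the last one: integrability of $g$ on $(r_1,\infty)$ only yields $\liminf_{r\to\infty} r\,g(r) = 0$, never a genuine limit, so the decay can be obtained only along a sequence --- this is precisely why the statement is phrased with $\{r_n\}$ and is later promoted to a true limit in Proposition \ref{prop_pt} by invoking the maximum principle. The dyadic averaging makes this $\liminf$ extraction quantitative while forcing the chosen radii to lie on slices where the trace $f(r_j,\cdot)$ is $H^1$ in $z$, so that the Agmon inequality may actually be applied; reconciling these almost-everywhere finiteness issues with the smallness of $r_j g(r_j)$ is the only genuinely delicate point.
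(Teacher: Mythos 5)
Your proof is correct, and the overall architecture coincides with the paper's: both arguments select good radii $r_n\in[2^n,2^{n+1}]$ by a mean-value/averaging argument applied to the tail of the convergent integral in \eqref{ass_f}, and both then convert an integral bound on the slice $\{r=r_n\}$ into a sup bound in $z$. Where you genuinely differ is in the slice estimate. The paper writes $|f(r_n,z_1)|^q-|f(r_n,z_2)|^q=\int_{z_2}^{z_1}\partial_z(|f|^q)\,dz$ and averages over $z_2\in[-r_n,r_n]$, so that the window length $2r_n$ must absorb the mismatch between the weights $r^{q+1}$ and $r^{\frac{\alpha+q+1}{2}}$; this is exactly where the hypothesis $\alpha\le q+3$ enters their proof. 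You instead apply the multiplicative Agmon inequality $\|\phi\|_{L^\infty}^2\le 2\|\phi\|_{L^2}\|\phi'\|_{L^2}$ to $\phi=|f(r,\cdot)|^{q/2}$, and the exponent identity $\frac{\alpha+q+3}{2}=1+\frac{q+1}{2}+\frac{\alpha}{2}$ then matches the weights exactly, with no constraint on $\alpha$ needed at this stage. Your version is thus marginally sharper (the restriction $\alpha\le q+3$ is only needed later, for the $z$-direction control and the maximum-principle step in the proof of Proposition \ref{prop_pt}), and your care in selecting $r_j$ on slices where $I_0(r_j),I_1(r_j)<\infty$, so that $\phi\in H^1(\re)$ and the Agmon inequality is legitimately applicable, closes the only measure-theoretic gap in the argument.
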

%%%%%%%%%%%%%%%%%%%%%%%%%%%%%%%%%%%%%
\begin{proof}
Let $n \in \mathbb{N}$ satisfy
$2^n > r_1$.
By the assumption and the Schwarz inequality, we have
\begin{align*}%
	&\int_{r > 2^n} \int_{-\infty}^\infty | f(r,z) |^{q-2} \left(
							r^{q+1} |f(r,z)|^2 + r^{\frac{\alpha+q+1}{2}} | f(r,z) | | \nabla_{r,z} f(r,z)|
							 \right) \,drdz \\
	&\le C  \int_{r > 2^n}\int_{-\infty}^\infty  | f(r,z) |^{q-2} \left(
							r^{q+1} |f(r,z)|^2 + r^{\alpha} |\nabla_{r,z} f(r,z) |^2
							\right) \,drdz \\
	&< \infty.
\end{align*}%
Here, we note that the Lebesgue dominated convergence theorem shows
\begin{align}%
\label{dom_con}
	\lim_{n\to \infty} \int_{r > 2^n}\int_{-\infty}^\infty  | f(r,z) |^{q-2}\left(
							r^{q+1} |f(r,z)|^2 + r^{\frac{\alpha+q+1}{2}} | f(r,z) | | \nabla_{r,z} f(r,z)|
							 \right) \,drdz = 0.
\end{align}%

The mean value theorem for integration implies that
there exists $r_n \in [2^n, 2^{n+1}]$ such that
\begin{align*}%
	&\int_{-\infty}^{\infty}
		|f(r_n,z)|^{q-2} \left(
				r_n^{q+2} |f(r_n,z)|^2 + r_n^{\frac{\alpha+q+3}{2}} |f(r_n,z)| | \nabla_{r,z}f(r_n,z)|
					\right) \,dz \\
	&= \frac{1}{\log 2} \int_{2^n}^{2^{n+1}}
			\frac{dr}{r} \int_{-\infty}^{\infty}
				|f(r,z)|^{q-2} \left(
						r^{q+2} |f(r,z)|^2 + r^{\frac{\alpha + q + 3}{2}} |f(r,z)| | \nabla_{r,z}f(r,z)|
					\right) \,dz \\
	&\le C \int_{r > 2^n}\int_{-\infty}^\infty  | f(r,z) |^{q-2} \left(
							r^{q+1} |f(r,z)|^2 + r^{\frac{\alpha+q+1}{2}} | f(r,z) | | \nabla_{r,z} f(r,z)|
							 \right) \,drdz.
\end{align*}%
We denote the left-hand side by $J_n$:
\begin{align*}%
	J_n := \int_{-\infty}^{\infty}
		|f(r_n,z)|^{q-2} \left(
				r_n^{q+2} |f(r_n,z)|^2 + r_n^{\frac{\alpha+q+3}{2}} |f(r_n,z)| | \nabla_{r,z}f(r_n,z)|
					\right) \,dz.
\end{align*}%
From the estimate above and \eqref{dom_con}, we obtain
\begin{align}%
\label{lim_rn}
	\lim_{n\to \infty} J_n = 0.
\end{align}%
On the other hand, by the fundamental theorem of calculus, we see that
for any $z_1, z_2 \in \re$,
\begin{align*}%
	&r_n^{\frac{\alpha+q+1}{2}} |f(r_n, z_1)|^q - r_n^{\frac{\alpha+q+1}{2}} |f(r_n,z_2)|^q \\
	& = r_n^{\frac{\alpha+q+1}{2}}
		\int_{z_2}^{z_1}  \partial_z \left( |f(r_n,z)|^q \right) \,dz \\
	&\le C r_n^{\frac{\alpha+q+1}{2}}
		\int_{-\infty}^{\infty} |f(r_n,z)|^{q-1} |\nabla_{r,z}f(r_n,z)| \,dz.
\end{align*}%
Integrating it over $[-r_n, r_n]$ with respect to $z_2$, we have
\begin{align*}%
	r_n^{\frac{\alpha+q+3}{2}} |f(r_n, z_1)|^q 
	&\le C  r_n^{\frac{\alpha+q+1}{2}} \int_{-r_n}^{r_n} | f(r_n,z_2) |^q \,dz_2 \\
	&\quad + C r_n^{\frac{\alpha+q+3}{2}} \int_{-\infty}^{\infty} |f(r_n,z)|^{q-1} |\nabla_{r,z}f(r_n,z)| \,dz \\
	&\le C J_n,
\end{align*}%
where we have used that
$\frac{\alpha+q+1}{2} \le q+2$,
which follows from the assumption
$\alpha \le q + 3$.
Since $z_1 \in \re$ is arbitrary, we conclude from \eqref{lim_rn} that 
\begin{align*}%
	\lim_{n\to \infty} r_n^{\frac{\alpha+q+3}{2}} \sup_{z \in \re}|f(r_n, z)|^q = 0,
\end{align*}%
and the proof of Lemma \ref {lem_pt_est} is now complete. 
\end{proof}

\begin{proof}[Proof of Proposition \ref{prop_pt}]
Let $\{r_n\}_{n=1}^{\infty}$ be the sequence given by Lemma \ref{lem_pt_est}.
Then, we have
\begin{align*}%
	\lim_{n\to \infty} r_n^{\frac{\alpha+q+3}{2q}} \sup_{z \in \re}|f(r_n, z)| = 0.
\end{align*}%
To prove Proposition \ref{prop_pt},
we apply the maximum principle to obtain the pointwise estimate
for general $r$ in the interval $[r_n, r_{n+1}]$.
However, before doing it,
we also need to control the pointwise behavior for $z$-direction.
Therefore, we claim that there exists a sequence
$\{ z_m \}_{m \in \mathbb{Z}}$
satisfying
$\lim_{m\to \pm \infty} z_m = \pm \infty$
such that
\begin{align}%
\label{pt_est_z}
	\lim_{m\to \pm \infty} |z_m| \sup_{r_n \le r \le r_{n+2}} r^{\frac{\alpha+q+1}{2}} | f(r, z_m) |^q = 0.
\end{align}%
The reason why we take the interval
$[r_n, r_{n+2}]$
instead of
$[r_n, r_{n+1}]$
is that
the length of the former interval $[r_n, r_{n+2}]$ has the bound 
from both above and below such as 
$r/4 \le r_{n+2} - r_n \le 8 r$
for all $r \in [r_n, r_{n+2}]$
(note that $r_n \in [2^n, 2^{n+1}]$).

Let us prove \eqref{pt_est_z}.
We fix $n \in \mathbb{N}$, namely, fix the interval $[r_n, r_{n+2}]$.
In the same way as in  the proof of Lemma \ref{lem_pt_est},
we see that there exists a sequence
$\{ z_m \}_{m \in \mathbb{Z}}$
satisfying
$z_{\pm l} \in [\pm 2^l, \pm 2^{l+1}]$
for each
$l \in \mathbb{N}$
and
\begin{align*}
	&\int_{r_n}^{r_{n+2}} |z_{\pm l}| |f(r,z_{\pm l})|^{q-2}
		\left( r^{q+1} |f(r,z_{\pm l})|^2 + r^{\frac{\alpha+q+1}{2}} |f(r,z_{\pm l})| | \nabla_{r,z}f(r,z_{\pm l})| \right)
		\,dr \\
	&\le C \left| \int_{\pm 2^l}^{\pm 2^{l+1}} \int_{r_n}^{r_{n+2}}
		|f(r,z)|^{q-2} \left( r^{q+1} |f(r,z)|^2 + r^{\frac{\alpha+q+1}{2}} |f(r,z)| | \nabla_{r,z}f(r,z)| \right)
		\, dz dr \right|.
\end{align*}
We denote the left-hand side by $K_{\pm l}$:
\begin{align*}
	K_{\pm l} := \int_{r_n}^{r_{n+2}} |z_{\pm l}| |f(r,z_{\pm l})|^{q-2}
		\left( r^{q+1} |f(r,z_{\pm l})|^2 + r^{\frac{\alpha+q+1}{2}} |f(r,z_{\pm l})| | \nabla_{r,z}f(r,z_{\pm l})| \right)
		\,dr.
\end{align*}
We note that the above estimate and the Lebesgue dominated convergence theorem
yield 
\begin{align}
\label{lim_km}
	\lim_{l\to \infty} K_{\pm l} = 0.
\end{align}
In what follows, for simplicity we abbreviate in such a way that
$m = \pm l$ and $K_m = K_{\pm l}$.
For every $r, \tilde{r} \in [r_n, r_{n+2}]$,
by the fundamental theorem of calculus, we deduce
\begin{align*}
	&|z_m| r^{\frac{\alpha+q-1}{2}} |f(r,z_m)|^q - 
	|z_m| \tilde{r}^{\frac{\alpha+q-1}{2}} |f(\tilde{r},z_m)|^q \\
	&= |z_m| \int_{\tilde{r}}^{r} \partial_{\rho} \left(
					\rho^{\frac{\alpha+q-1}{2}} |f(\rho,z_m)|^q \right) \,d\rho \\
	&\le C |z_m| \int_{r_n}^{r_{n+2}}
		\left[ \rho^{\frac{\alpha+q-3}{2}} |f(\rho,z_m)|^q
			+ \rho^{\frac{\alpha+q-1}{2}} |f(\rho,z_m)|^{q-1} |\nabla_{r,z} f(\rho,z_m)|
			\right] \,d\rho.
\end{align*}
Since $\alpha \le q + 3$, we have $\frac{\alpha + q-1}{2} \le q+1$, and hence 
integration over $[r_n, r_{n+2}]$ with respect to $\tilde{r}$ of 
both sides of the above estimate yields 
\begin{align*}
	&(r_{n+2}-r_n) |z_m| r^{\frac{\alpha+q-1}{2}} |f(r,z_m)|^q \\
	&\le |z_m| \int_{r_n}^{r_{n+2}} \tilde{r}^{\frac{\alpha+q-1}{2}} |f( \tilde{r}, z_m )|^q \,d\tilde{r} \\
	&\quad + (r_{n+2}-r_n) |z_m|
		\int_{r_n}^{r_{n+2}} \left[
			\rho^{\frac{\alpha+q-3}{2}} |f(\rho, z_m)|^q
				+ \rho^{\frac{\alpha+q-1}{2}} |f(\rho,z_m)|^{q-1} |\nabla_{r,z} f(\rho,z) |
				\right] \,d \rho \\				
	&\le K_m.
\end{align*}
Combining it with
$r_{n+2}-r_n \sim r$
and \eqref{lim_km}, we obtain the claim \eqref{pt_est_z}.

Finally, we apply the maximum principle to complete the proof of Proposition \ref{prop_pt}.
Let $r > r_1$ be sufficiently large and take $n \in \mathbb{N}$ so that
$r \in [r_n, r_{n+2}]$ with $\{r_n\}_{n=1}^\infty$ satisfying (\ref{lim_rn}).
We also take
$\{ z_m \}_{m \in \mathbb{Z}}$ so that \eqref{pt_est_z} holds.
Let
$D_{n,m} = \{ (r,z) \in \re^+ \times \re ;\mbox{ } r \in [r_n, r_{n+2}] \times [z_{-m}, z_m] \}$.
Then, by the maximum principle, for every $(r,z) \in D_{n,m}$, we have
\begin{align*}
	& r^{\frac{\alpha+q+3}{2q}} |f(r,z)| \\
	&	\le  \max  \left\{ r_{n+2}^{\frac{\alpha+q+3}{2q}} \sup_{w\in [z_{-m}, z_m]} 
		|f(r_n, w)|,\ 
			r_{n+2}^{\frac{\alpha+q+3}{2q}} \sup_{w\in [z_{-m}, z_m]} |f(r_{n+2}, w)|, \right. \\
				&\left.
			r^{\frac{\alpha+q+3}{2q}} \sup_{\rho \in [r_n, r_{n+2}]} |f(\rho ,z_m)|,\ 
			r^{\frac{\alpha+q+3}{2q}} \sup_{\rho\in [r_n, r_{n+2}]} |f(\rho,z_{-m})| \right\}\\
	& 	\le  \max  \left\{ 8^{\frac{\alpha+q+3}{2q}}r_{n}^{\frac{\alpha+q+3}{2q}} \sup_{w\in {\mathbb R}} 
		|f(r_n, w)|,\ 
			r_{n+2}^{\frac{\alpha+q+3}{2q}} \sup_{w\in {\mathbb R}} |f(r_{n+2}, w)|, \right. \\
				&\left.
			8^{\frac{\alpha+q+1}{2q}}r_{n+2}^{\frac{\alpha+q+2}{2q}} 
			\sup_{\rho \in [r_n, r_{n+2}]}\rho^{\frac{\alpha+q+1}{2q}} |f(\rho ,z_m)|,\ 
			8^{\frac{\alpha+q+1}{2q}}r_{n+2}^{\frac{\alpha+q+2}{2q}} 
			\sup_{\rho \in [r_n, r_{n+2}]}\rho^{\frac{\alpha+q+1}{2q}} 
|f(\rho ,z_{-m})| \right\}.
\end{align*}
Here we have used that $r_{n+2} \le 8r_n$.  
Letting $m \to \infty$ in the above, we have by \eqref{pt_est_z} that 
\begin{align*}
	r^{\frac{\alpha+q+3}{2q}} \sup_{z \in \re} |f(r,z)| 
	\le  \max\left\{
			8^{\frac{\alpha+q+3}{2q}}r_n^{\frac{\alpha+q+3}{2q}} 
			\sup_{w\in \re} |f(r_n, w)|,\ 
			r_{n+2}^{\frac{\alpha+q+3}{2q}} \sup_{w\in \re} |f(r_{n+2}, w)| \right\}
\end{align*}
holds for $r \in [r_n, r_{n+2}]$.
Consequently, taking the limit
$r \to \infty$ (with $n\to \infty$), we obtain from  Lemma \ref{lem_pt_est} that 
\begin{align*}
	\lim_{r\to \infty} r^{\frac{\alpha+q+3}{2q}} \sup_{z \in \re} |f(r,z)| = 0,
\end{align*}
which yields the desired estimate. 
This completes the proof of Proposition \ref{prop_pt}.
\end{proof}

%%%%%%%%%%%%%%%%%%%%%%%%%%%%%%%%%%%%%%%%%%%%
\subsection{Proof of Theorem \ref{thm_asymp}}
By the assumptions on Theorem \ref{thm_asymp},
we apply Lemma \ref{lem_en_est} and Proposition \ref{prop_pt}
with $\alpha = \min\{ q + 3, q+2-k \}$
to obtain
\begin{align*}
	\lim_{r\to \infty} r^{ 1+\frac{3}{q} - \frac{1}{2q} \max\{0,1+k\} } \sup_{z \in \re} |\Omega(r,z)| = 0.
\end{align*}
Concerning the estimate of  $\omega^{\theta}$, we have by the relation 
$\omega^{\theta} = r \Omega$ that 
\begin{align*}
	\lim_{r\to \infty} r^{\frac{3}{q} - \frac{1}{2q} \max\{0,1+k\} } \sup_{z \in \re} |\omega^{\theta} (r,z)| = 0.
\end{align*}
This completes the proof of Threorem \ref{thm_asymp}.
%%%%%%%%%%%%%%%%%%%%%%%%%%%%%%%%%%%%%%%%%%%%%%%%%%%%%%%%%
\subsection{Proof of Corollary \ref{cor_liouville}} 
Since $k \le 2q + 5$, we see that $1 + \frac3q -\frac{1}{2q}\max\{0, 1+k\} \ge 0$, 
and hence it follows from (\ref{asym_omega}) that 
$$
\lim_{r\to\infty}\sup_{z\in {\mathbb R}}|\Omega(r, z)| =0.  
$$
Since $\lim_{|z|\to\infty}|\Omega(r, z| = 0$ for each fixed $r > 0$, 
we obtain from the maximum principle that 
$\Omega \equiv 0$ on $\re^3$. Since $\omega^\theta \in C(\re^3)$, we conclude that 
$\omega^\theta \equiv 0$ on $\re^3$.   
This proves Corollary \ref{cor_liouville}.

\section*{acknowledgement}
This work was supported by JSPS Grant-in-Aid for Scientific Research(S)
Grant Number JP16H06339.

\end{document}